\newcommand{\ol}{\overline}
\newcommand{\ul}{\underline}
\newcommand{\1}{\mathbbm{1}}
\newcommand{\llv}{\left\lvert}
\newcommand{\rrv}{\right\rvert}
\newcommand{\llb}{\left\lbrace}
\newcommand{\rrb}{\right\rbrace}
\DeclareMathOperator{\Tor}{Tor}
\DeclareMathOperator{\Ext}{Ext}
\DeclareMathOperator{\Hom}{Hom}
\DeclareMathOperator{\End}{End}
\DeclareMathOperator{\Rep}{Rep}
\newcommand{\cP}{\mathcal{P}}
\newtheorem{thm}{Theorem}[section]
\theoremstyle{plain}
\newtheorem{prop}[thm]{Proposition}
\newtheorem{lem}[thm]{Lemma}
\newtheorem*{theorem*}{Theorem}
\theoremstyle{definition}
\newtheorem{defn}[thm]{Definition}
\theoremstyle{remark}
\newtheorem{rem}[thm]{Remark}
\title{Cohomology of coloured partition algebras}
\author{James Cranch}
\address{(James Cranch) School of Mathematical and Physical Sciences, University of Sheffield, Hounsfield Road, S3 7RH, UK}
\email{j.d.cranch@sheffield.ac.uk}
\author{Daniel Graves}
\address{(Daniel Graves) Lifelong Learning Centre, University of Leeds, Woodhouse, Leeds, LS2 9JT, UK}
\email{dan.graves92@gmail.com}
\date{}
\definecolor{bw1}{RGB}{230, 159,   0} 
\definecolor{bw2}{RGB}{ 86, 180, 233} 
\definecolor{bw3}{RGB}{  0, 158, 115} 
\definecolor{bw4}{RGB}{240, 228,  66} 
\definecolor{bw5}{RGB}{  0, 114, 178} 
\definecolor{bw6}{RGB}{213,  94,   0} 
\definecolor{bw7}{RGB}{204, 121, 167} 
\begin{document}

\keywords{
  diagram algebras,
  cohomology of algebras,
  coloured partition algebras,
  (co)homological stability}
\subjclass{16E40, 20J06, 16E30}


\begin{abstract}
Coloured partition algebras were introduced by Bloss and
exhibit a Schur-Weyl duality with certain complex reflection
groups. In this paper we show that these algebras exhibit homological
stability by demonstrating that their homology groups are stably
isomorphic to the homology groups of a wreath product, generalizing work of Boyd--Hepworth--Patzt and Boyde for the usual partition algebras.
\end{abstract}

\maketitle

\section{Introduction}

\emph{Coloured partition algebras} were first introduced by Bloss \cite{Bloss} as a generalization of the partition algebras introduced by Jones \cite{Jones} and Martin \cite{Martin-partition}.

For a commutative ring $k$, an element $\delta \in k$ and a group $G$, Bloss demonstrated that the $G$-coloured
partition algebra, $P_n(\delta,G)$ (recalled in Section \ref{CPA-sec}), exhibits a Schur-Weyl duality with the wreath product
groups $G\wr \Sigma_n$ \cite{Bloss}*{Theorem 6.6}. For a finite group
$G$, Mori \cite[Remark 4.25]{Mori} shows that the $G$-coloured
partition algebras arise as the endomorphism ring of a certain object
in a Deligne-type category (see \cites{Deligne,Knop1,Knop2,Mori}) built
from $\Rep(k[G])$, the category of $k[G]$-modules which are
finitely-generated and projective over $k$.

We study the (co)homology of coloured partition algebras. The algebra $P_n(\delta,G)$ can be equipped with an augmentation $P_n(\delta,G)\rightarrow k$. This is described in Subsection \ref{cohom-defn-subsec}. Loosely speaking it sends the invertible generators, which form the group $G\wr \Sigma_n$, to $1\in k$ and all other generators to $0\in k$. Let $\1$ denote the a copy of the ground ring $k$ upon which $P_n(\delta,G)$ acts via the augmentation. Following Benson \cite[Definition 2.4.4]{Benson1}, the \emph{homology} and \emph{cohomology} of $P_n(\delta,G)$ are defined as $\Tor_{\star}^{P_n(\delta,G)}(\1,\1)$ and $\Ext_{P_n(\delta,G)}^{\star}(\1,\1)$ respectively. Our main result is as follows, following immediately from Theorems \ref{iso-thm1} and \ref{iso-thm2} in the text.

\begin{thm}
\label{thm-A}
For $n\geqslant 2$, there exist natural isomorphisms of $k$-modules
\[ \Tor_{q}^{P_n(\delta,G)}(\mathbbm{1},\mathbbm{1}) \cong \Tor_q^{k[G\wr  \Sigma_n]}(\mathbbm{1},\mathbbm{1}) \quad \text{and} \quad \Ext_{P_n(\delta,G)}^q(\mathbbm{1},\mathbbm{1}) \cong \Ext_{k[G\wr \Sigma_n]}^q(\mathbbm{1},\mathbbm{1})\]
for $q\leqslant n$. These isomorphisms also hold for $n=1$ with $q=0$. Furthermore, if $\delta$ is invertible, these isomorphisms hold for all $n\geqslant 1$ and $q\geqslant 0$.
\end{thm}

If we take $G$ to be the trivial group and consider homology, this extends the known range (see \cite{Boyde2} by $1$. Furthermore, as a consequence of Theorem \ref{thm-A}, we see that the coloured partition algebras exhibit (co)homological stability. In particular, the inclusion map $P_{n-1}(\delta,G) \rightarrow P_n(\delta,G)$
 induces maps on homology and cohomology,
 \[\Tor_q^{P_{n-1}(\delta,G)}(\1 ,\1) \rightarrow \Tor_q^{P_{n}(\delta,G)}(\1 ,\1)\quad \text{and} \quad \Ext_{P_{n}(\delta,G)}^{q}(\1 ,\1) \rightarrow \Ext_{P_{n-1}(\delta,G)}^{q}(\1 ,\1),\]
which are isomorphisms for $2q\leqslant n-1$. This follows from Gan's result \cite[Corollary 6]{Gan} for the homological
stability of wreath products $G\wr \Sigma_n$ (see also
\cite[Proposition 1.6]{HW}). We note that homological stability of
wreath products implies cohomological stability in the same range by the universal
coefficient theorem and the five lemma. The coloured partition algebras join a growing list of algebras which have been shown to exhibit (co)homological stability: the Temperley--Lieb algebras \cite{BH1,Sroka}; the Brauer algebras \cite{BHP}; Iwahori-Hecke
algebras of Type $A$ and Type $B$ \cites{Hepworth, Moselle}; the
partition algebras \cite{BHP2, Boyde2}.

\subsection*{Acknowledgements}
The second author would like to thank Andrew Fisher and Sarah Whitehouse for helpful conversations in work related to this project. He would also like to thank Rachael Boyd and Richard Hepworth for interesting conversations at the 2024 British Topology Meeting in Aberdeen and Guy Boyde for helpful conversations on related work.

The authors would like to thank the Isaac Newton Institute for Mathematical Sciences, Cambridge, for support and hospitality during the programme ``Equivariant homotopy theory in context'', where some of the writing of this paper was undertaken. This work was supported by EPSRC grant EP/Z000580/1.

We would like to thank the anonymous referees for their very detailed and helpful reports, which have improved the paper.

\subsection{Conventions}
Throughout the paper, $k$ will denote a unital, commutative ring. We
will use $\ul{n}$ to denote the set $\{1, \dotsc, n\}$. Throughout the
paper, $G$ will denote a group.

\section{Coloured partition algebras}
\label{CPA-sec}
In this section we recall the definitions of the partition algebras and the $G$-coloured partition algebras. Specifically, we recall these algebras as the endomorphism rings of a certain objects in a $k$-linear category. In this way we recover the partition algebras of Jones and Martin \cite{Jones,Martin-partition} and $G$-coloured partition algebras of Bloss \cite{Bloss}.

\subsection{Partitions and cospans}

In this subsection, we give a description of the \emph{partition
category}. This was introduced by Deligne \cite{Deligne} and has been
further studied in the papers \cite{CO,LS1,Comes}. The
construction we present here is equivalent to those that exist in the
literature but is framed in the language of \emph{cospans}. Throughout this section we will work in the category $\mathbf{Fin}$ of
finite sets and set maps.

Recall that a \emph{partition} of a finite set $A$ is a collection of
non-empty subsets of $A$ such that each element of $A$ lies in
precisely one of the subsets. We call this collection of subsets the
\emph{set of components} of the partition. Furthermore, recall that a
\emph{cospan} in the category $\mathbf{Fin}$ is a diagram of the form
$A\rightarrow U \leftarrow B$. The morphisms $A\rightarrow U$ and
$B\rightarrow U$ will be referred to as the \emph{left} and
\emph{right legs} of the cospan. For fixed $A$ and $B$, two cospans $A\rightarrow U
\leftarrow B$ and $A\rightarrow U^{\prime} \leftarrow B$ are said to
be \emph{equivalent} if there is an isomorphism $U\rightarrow
U^{\prime}$ compatible with the legs of the two cospans.

\begin{defn}
Let $A$ and $B$ be finite sets.  A \emph{partition diagram} from $A$
to $B$ is a partition of $A\sqcup B$. Equivalently, it is an equivalence
class of jointly surjective cospans $A\rightarrow U\leftarrow B$ (that is, an equivalence class of cospans such that the induced map from $A\sqcup B$ to $U$ is a surjection).
\end{defn}

\begin{rem}
The equivalence of the two definitions can be seen as follows. Given a partition
of $A\sqcup B$, we can form a cospan $A\rightarrow U\leftarrow B$ of
the correct form where the legs consist of the maps sending the
elements of $A$ and $B$ to the components which contain them. Conversely,
given an equivalence class of cospans $A\rightarrow U\leftarrow B$,
for each $u\in U$, we take the disjoint union of the preimage of $u$
in $A$ and in $B$. The collection of these sets is the necessary
partition. We can therefore refer to $U$ as the set of components.
\end{rem}

\subsection{Conventions for partition diagrams}

Consider a partition diagram $A\rightarrow U\leftarrow B$. It is natural to draw partition diagrams by drawing graphs (see \cite{Jones,Martin-partition}). We take a
vertex for every element of $A\sqcup B$. An element of $U$ is a subset of $A\rightarrow U\leftarrow B$ and for each element of $U$ we connect the vertices contained in that set. In other words, two vertices lie in the same
component if and only if they lie in the same part of the
partition. We see that there are many possible
representatives of a partition. However, we take all such representatives to be equivalent; the structure of the graph beyond its
components is irrelevant.

In all the diagrams we draw, we will adopt the convention that the
vertices labelled by elements of $A$ are in a single column on the left and the vertices labelled by elements of
$B$ are in a single column on the right, and we will use language in
keeping with this geometric convention as outlined in the following definition.

\begin{defn}
\label{terminology-defn}
Let $A$ and $B$ be finite sets and consider a partition diagram from $A$ to $B$.
  \begin{enumerate}
  \item The elements of $A$ and $B$ will be referred to as \emph{vertices}. 
  \item A pair of elements in the same part will be referred to as an
    \emph{edge};
  \item An edge between $a\in A$ and $b\in B$ will be referred to as
    \emph{propagating}.
    \item A component containing a propagating edge will be called a \emph{propagating component}.
  \item A singleton component will be referred to as an \emph{isolated
  vertex}.
  \item A partition diagram from $A$ to $A$ consisting entirely of
    propagating components of size two will be called a
    \emph{permutation diagram}.
  \end{enumerate}
\end{defn}

\subsection{Partition category}
Partition diagrams form a category. 

\begin{defn}
The partition category $\mathcal{P}$ has finite sets as objects. The homset $\mathcal{P}(A,B)$ consists of all partition diagrams $A\rightarrow U\leftarrow B$. Composition is defined by the
usual recipe for composing (co)spans \cite[Section 2.6]{Benabou}, which ensures
associativity. The composite of $d_1=(A\rightarrow U\leftarrow B)$ and
$d_2=(B\rightarrow V\leftarrow C)$ is $d_1d_2=(A\rightarrow W\leftarrow C)$, where
$W$ is the union of the images of $A$ and $C$ in the pushout
$U\cup_BV$. The identity on $A$ is the cospan $A\rightarrow
A\leftarrow A$ where both maps are identities. When regarded as a partition, it is the partition on
$A\sqcup A$ with all components of size two, each containing matching
elements, one from each summand.
\end{defn}

When we compose in this way, we say that the \emph{number of internal
components} is the size of the subset of $U\cup_BV$ not in the image
of $A$ or $C$. In terms of partitions, this amounts to the following: given
partitions of $A\sqcup B$ and $B\sqcup C$, we form the finest
partition of $A\sqcup B\sqcup C$ so that each part of the two given
partitions is contained in some part, and then we restrict to $A\sqcup
C$. The number of internal components is then the number of components
omitted by this restriction: those contained entirely within $B$.

We can form a linear category out of partition diagrams following \cite[Section 2]{Comes}. In the following category, the objects are the same as $\mathcal{P}$, the homsets are the $k$-linear span of the homsets in $\mathcal{P}$ but we have a different composition rule, which allows us to keep track of the number of internal components formed in composition.

\begin{defn}
\label{lin-part-cat-defn}
Let $\delta\in k$. The \emph{linear partition category} $\cP(\delta)$
is the $k$-linear category with objects finite sets and
homsets $\cP(\delta)(A,B)$ given by the free $k$-module with basis the
set of partition diagrams from $A$ to $B$.

Composition is extended linearly from the following recipe: the
composite of diagrams $d_1$ and $d_2$ is $\delta^nd_1d_2$: the scalar
multiple of the composite diagram $d_1d_2$ (the composite as formed in $\mathcal{P}$) by $\delta$ raised to the
power of the number of internal components in composing $d_1$ and
$d_2$.
\end{defn}

For definiteness, when we are working with partition diagrams from
$\ul{m}$ to $\ul{n}$ we will write $1,\ldots,m$ for elements of the
left-hand summand, and $\ol{1},\ldots,\ol{n}$ for elements of the
right-hand summand.

\begin{defn}
\label{part-alg-defn}
The \emph{partition algebra} $P_n(\delta)$ is the endomorphism algebra $\End_{P(\delta)}(\ul{n})$.
\end{defn}

\subsection{Coloured partitions}

Partition algebras where the edges on a partition diagram are \emph{coloured} by elements of a group were originally defined by Bloss \cite{Bloss}. Categories of coloured partitions  have
been studied in \cite{LS2}. In this subsection we recall the generalized versions of Definitions \ref{lin-part-cat-defn} and \ref{part-alg-defn} which contain the additional data of a $G$-colouring on partitions. We continue to use our terminology from previous
subsections.

\begin{defn}
Let $G$ be a group. A \emph{$G$-colouring} of a partition diagram
consists of a function $\gamma$, defined on pairs of elements in the
same part, and taking values in $G$, such that:
\begin{itemize}
\item $\gamma(x,x)=1$ for any $x\in A\sqcup B$;
\item $\gamma(x,y)=\gamma(y,x)^{-1}$ for any $x,y$ in the same part;
\item $\gamma(x,z)=\gamma(x,y)\gamma(y,z)$ for any $x,y,z$ in the same
  part.
\end{itemize}
If we wish to specify that we are considering the $G$-colouring on a specific diagram $d$, we will use the notation $\gamma_d$.
\end{defn}

\begin{rem}
  A partition on a set $A\sqcup B$ determines an \emph{indiscrete
  groupoid}: objects are $A\sqcup B$, and there is a unique morphism
  from $x$ to $y$ if and only if $x$ and $y$ are in the same part (and
  no morphisms otherwise). The definition of a $G$-colouring amounts exactly to a map of
  groupoids from the indiscrete groupoid on the partition to $G$.
\end{rem}

\begin{defn}
  Given any partition diagram, there is the \emph{trivial}
  $G$-colouring, where $\gamma(x,y)=1$ for any pair of elements $x$
  and $y$ in the same part of the partition.
\end{defn}

\begin{rem}
  A component of size $n$ of a partition has $|G|^{n-1}$ different
  $G$-colourings. Indeed, any colouring can be determined by fixing an
  element $x$, defining $\gamma(x,y)$ for all $y\neq x$, and extending
  the definition to all elements using $\gamma(x,x)=1$ and
  $\gamma(y_1,y_2)=\gamma(x,y_1)^{-1}\gamma(x,y_2)$.

  A more canonical construction is to choose $\alpha_x\in G$ for each
  element $x$, and to define $\gamma(x,y)=\alpha_x^{-1}\alpha_y$. This
  construction is unchanged by taking $\alpha'_x=g\alpha_x$, so the
  set of $G$-colourings of a component of size $n$ is naturally
  isomorphic to the quotient of $G^n$ by the free action of $G$ by
  left multiplication.
\end{rem}

Partition diagrams equipped with $G$-colourings form a partial
category: if a $G$-colouring of a partition of $A\sqcup B$ and a
$G$-colouring of $B\sqcup C$ can be extended to a $G$-colouring of a
partition of $A\sqcup B\sqcup C$ then it can be extended uniquely. 

As for partitions, we can form a linear category of coloured partitions. Intuitively speaking, the associativity of the product follows from the associativity of composition for the underlying partitions and the associativity of multiplication in a group.

\begin{defn}
The \emph{linear $G$-partition category} $\cP(\delta,G)$ is the
linear category whose objects are finite sets and whose homsets are
free $k$-modules with basis the set of $G$-coloured partition
diagrams. The composition of $G$-coloured diagrams $d_1$ with $d_2$ is
$\delta^nd_1d_2$ (where $n$ is the number of internal components) if
the $G$-colourings extend to a $G$-colouring of $d_1d_2$, and zero
otherwise.
\end{defn}

\begin{defn}
The \emph{$G$-partition algebra}, $P_n(\delta,G)$, is the endomorphism algebra $\End_{\cP(\delta,G)}(\ul{n})$.
\end{defn}

\subsection{Functoriality}

The coloured partition algebras are functorial in $G$. Given a group homomorphism $G\rightarrow H$ there is an induced map $P_n(\delta , G)\rightarrow P_n(\delta , H)$ determined on basis diagrams by applying the group homomorphism to labels in the colouring. This is compatible with composition and reversal of arrows in the coloured partition algebras since group homomorphisms are compatible with group multiplication and taking inverses.

In particular, since the trivial group is a retract of any group $G$, the usual uncoloured partition algebra $P_n(\delta)$ is always a retract of a coloured partition algebra $P_n(\delta , G)$. Furthermore, this retraction fits into a commutative square with the retraction of the symmetric group algebra from the wreath product group algebra.

\subsection{Conventions for coloured partition diagrams}

It is also natural to draw $G$-coloured partitions using graphs. Our graphs will be drawn as follows. We will have two identical columns of vertices drawn in parallel. The vertices in the left-hand column will be labelled $1$ up to $n$ from top to bottom. The vertices in the right-hand column will be labelled $\ol{1}$ up to $\ol{n}$ from top to bottom. In order to
capture the $G$-colouring they must be directed graphs and edges
must be labelled by elements of $G$. To find $\gamma(x,y)$, one
multiplies the edge labels of a path from $x$ to $y$; traversing an
edge labelled by $g$ in the reverse direction amounts to traversing an
edge labelled by $g^{-1}$. We implicitly assume that, if there are
multiple paths, then they will have the same product. An equivalent
condition, which is easier to check, is that any loops have product
$1\in G$.

Regarded in such language, a composite of partitions is zero if it
would form a loop around which the product is non-trivial.

In order to keep our diagrams simple, if edges are not drawn in a
component, we mean that that component has the trivial
$G$-colouring.

We note that there is an inclusion map $P_n(\delta,G)\rightarrow
P_{n+1}(\delta,G)$. This sends a $G$-coloured partition, $d$, of the
set $\ul{n}$ to the $G$-coloured partition of the set $\ul{n+1}$
consisting of the components of $d$ and the component $\{n+1,
\ol{n+1}\}$. The $G$-colouring, $\gamma$, of $d$ is extended to this
new partition by defining $\gamma(n+1,\ol{n+1})=1\in G$.

\subsection{Defining the augmentation}
\label{cohom-defn-subsec}
Recall that a $k$-algebra $A$ is said to be \emph{augmented} if it
comes equipped with a $k$-algebra map $\varepsilon\colon A \rightarrow
k$.

As for partition diagrams (see \cite[Proposition 2]{Martin} for instance), the composite of two coloured partition
$n$-diagrams, having $i$ and $j$ propagating components respectively,
has at most $\min(i,j)$ propagating components. We can therefore make
the following definition. Let $I_{n-1}$ denote the two-sided ideal in
$P_n(\delta , G)$ spanned $k$-linearly by non-permutation diagrams.

\begin{lem}
\label{partition-ideal-iso-lem}
There is an isomorphism of $k$-algebras $P_n(\delta,G)/I_{n-1} \cong
k[G\wr \Sigma_n]$.
\end{lem}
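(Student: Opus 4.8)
The plan is to produce an explicit $k$-basis of the quotient indexed by $G\wr\Sigma_n$ and then to check that composition of the corresponding diagrams realises the wreath product multiplication.

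I would begin by pinning down the permutation diagrams among all $G$-coloured partition diagrams from $\ul{n}$ to $\ul{n}$. Such a diagram has $2n$ vertices, and if it has the maximal number $n$ of propagating components then each propagating component must use exactly one left and one right vertex; hence every component has size two, there are no isolated vertices, and the diagram is a permutation diagram in the sense of Definition \ref{terminology-defn}. By the stated bound that composing diagrams with $i$ and $j$ propagating components produces at most $\min(i,j)$ propagating components, the number of propagating components cannot increase under composition. Therefore the $k$-span of the non-permutation diagrams (those with at most $n-1$ propagating components) is closed under multiplication on both sides, so it is exactly the two-sided ideal $I_{n-1}$, and the images of the permutation diagrams form a $k$-basis of $P_n(\delta,G)/I_{n-1}$.

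Next I would set up the bijection on bases. A permutation diagram is a perfect matching of $\ul{n}$ with $\{\ol{1},\dots,\ol{n}\}$, that is, a permutation $\sigma\in\Sigma_n$ with left vertex $i$ joined to right vertex $\ol{\sigma(i)}$; by the colour-counting remark each size-two component carries exactly $|G|$ colourings, so a $G$-colouring is a tuple $\mathbf{g}=(g_1,\dots,g_n)\in G^n$ with $g_i=\gamma(i,\ol{\sigma(i)})$. This identifies the $G$-coloured permutation diagrams with the underlying set $G^n\times\Sigma_n$ of $G\wr\Sigma_n$, and extends $k$-linearly to a $k$-module isomorphism onto $k[G\wr\Sigma_n]$.

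Finally I would verify multiplicativity. Composing two $G$-coloured permutation diagrams $(\sigma;\mathbf{g})$ and $(\tau;\mathbf{h})$ produces no internal components and is never zero: after gluing, the graph is a disjoint union of paths of length two, so it contains no nontrivial loop and the two colourings always extend. The composite is again a permutation diagram; its underlying permutation is $\tau\sigma$, and the colour of the component through $i$ is the path-product $\gamma(i,\ol{\sigma(i)})\,\gamma(\sigma(i),\ol{\tau\sigma(i)})=g_i\,h_{\sigma(i)}$. Thus the span of permutation diagrams is a subalgebra carrying the multiplication $(\sigma;\mathbf{g})(\tau;\mathbf{h})=(\tau\sigma;\,(g_i h_{\sigma(i)})_i)$, and one checks directly that the assignment $(\sigma;\mathbf{g})\mapsto(\mathbf{g},\sigma^{-1})$ intertwines this law with the semidirect-product multiplication of $G\wr\Sigma_n$, giving the desired algebra isomorphism. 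The main obstacle is precisely this last bookkeeping: matching the twisting of the colours against the $\Sigma_n$-action on $G^n$ forces the inverse on the permutation factor, and keeping the indexing of the colours and the order of composition mutually consistent is the one place where genuine care is required.
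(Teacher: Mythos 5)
Your proposal is correct and takes essentially the same route as the paper: identify the permutation diagrams as a $k$-module basis of $P_n(\delta,G)/I_{n-1}$ and match them with the elements of $G\wr\Sigma_n$. You actually go further than the paper's own proof, which stops at the basis identification and leaves the multiplicative verification implicit; your explicit check (including the inverse on the permutation factor needed to reconcile diagram composition with the semidirect-product law) is accurate and fills in precisely what the paper omits.
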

\begin{proof}
The two-sided ideal $I_{n-1}$ has a $k$-module basis of
$G$-coloured partition $n$-diagrams with fewer than $n$
propagating components. The quotient therefore has a $k$-module basis
consisting of the permutation diagrams (that is, the $G$-coloured
partition $n$-diagrams with precisely $n$-propagating edges) with the product induced from $P_n(\delta,G)$. This product is precisely the composition in the wreath product written in terms of labelled permutation diagrams (see \cite[Subsection 4.1]{Bloss} for instance).
\end{proof}

\begin{defn}
We equip $P_n(\delta, G)$ with an augmentation $\varepsilon\colon
P_n(\delta,G) \rightarrow k$ that sends the permutation diagrams to $1
\in k$ and all non-permutation diagrams to $0\in k$.  The
\emph{trivial module}, $\mathbbm{1}$, consists of a single copy of the
ground ring $k$, where $P_n(\delta , G)$ acts on $k$ via the
augmentation.
\end{defn}

\section{Proving Theorem \ref{thm-A}}
\label{idem-cover-sec}

We begin by recalling the definition of $k$-free idempotent left cover and its associated Mayer-Vietoris complex from \cite{Boyde2}. Having recalled the definition, we prove that the
two-sided ideal $I_{n-1}$ introduced in the previous section has a
$k$-free idempotent left cover. This is the key technical work
required to deduce our main result.

\subsection{Idempotent covers and the Mayer-Vietoris complex}

\begin{defn}
Let $A$ be a $k$-algebra. Let $I$ be a two-sided ideal of $A$. Let
$w\geqslant h \geqslant 1$. An \emph{idempotent left cover of $I$ of
height $h$ and width $w$} is a collection of left ideals $J_1,\dotsc ,
J_w$ in $A$ such that
\begin{itemize}
    \item $J_1+\cdots +J_w=I$;
    \item for $S\subset \ul{w}$ with $\lvert S\rvert \leqslant h$, the
      intersection
    \[\bigcap_{i\in S} J_i\]
    is either zero or is a principal left ideal generated by an
    idempotent.
\end{itemize}
If $I$ is free as a $k$-module, then an idempotent left cover is said
to be \emph{$k$-free} if there is a choice of $k$-basis for $I$ such
that each $J_i$ is free on a subset of this basis.
\end{defn}

\begin{defn}
\label{MV-defn}
Let $A$ be a $k$-algebra. Let $I\subset A$ be a two-sided ideal. Let $J_1,\dotsc , J_w$ be an idempotent left cover of $I$. The \emph{Mayer-Vietoris complex associated to the idempotent left cover}, $C_{\star}$, is the chain complex of left $A$-modules defined as follows. We set
\[C_p= \underset{\llv S \rrv = p}{\bigoplus_{S \subset \ul{w}}}\bigcap_{i\in S} J_i\]
for $1\leqslant p \leqslant w$. We set $C_0=A$, $C_{-1}=A/I$ and $C_n=0$ for $n>w$ and $n<-1$.

The differential $C_0\rightarrow C_{-1}$ is the projection map $A\rightarrow A/I$. The differential $C_1\rightarrow C_0$ is the direct sum of the inclusion of the left ideals $J_i\rightarrow A$. For $p\geqslant 2$, the differential $C_p\rightarrow C_{p-1}$ is defined on the summand $\cap_{i\in S} J_i$ by
\[x\mapsto \sum_{j\in S} (-1)^{\#(S,j)} i_{(S,j)}(x)  \]
where $\#(S,j)$ is the number of elements of $S$ that are less than $j$ and $i_{(S,j)}$ is the inclusion
\[\bigcap_{i\in S} J_i \rightarrow \bigcap_{i\in S\setminus \llb j\rrb} J_i.\]
\end{defn}

We will make use of the following result, which can be proved by showing that if a $k$-free idempotent cover has height $h$, then the Mayer-Vietoris complex is a partial projective resolution of length $h$. This can be found in \cite[Theorem B]{Fisher-Graves}, with the homological statement originally due to Boyde \cite[Theorem 1.7]{Boyde2}. 

\begin{prop}
\label{Boyde-FG-prop}
Let $A$ be an augmented $k$-algebra with trivial module $\mathbbm{1}$. Let $I$ be a two-sided ideal of $A$ which is free as a $k$-module and which acts as multiplication by $0\in k$ on $\mathbbm{1}$. Suppose that there exists a $k$-free idempotent left cover of $I$ of height $h$ and width $w$. There are natural isomorphisms of $k$-modules
\[\Tor_q^A(\mathbbm{1},\mathbbm{1}) \cong \Tor_q^{A/I}(\mathbbm{1},\mathbbm{1})
\quad
\text{and} \quad
\Ext_A^q(\mathbbm{1},\mathbbm{1}) \cong \Ext_{A/I}^q(\mathbbm{1},\mathbbm{1})\]
for $q\leqslant h$. Furthermore, if $h=w$, then the isomorphisms holds for all $q$.
\end{prop}

\subsection{A free idempotent left cover}
We will construct a $k$-free
idempotent left cover of the two-sided ideal $I_{n-1}$ in
$P_n(\delta,G)$ and use Proposition \ref{Boyde-FG-prop} to deduce stable
isomorphisms on (co)homology. We begin by defining our families of left ideals.

\begin{defn}
For $1\leqslant i \leqslant n$, let $K_i$ denote the left ideal of
$P_n(\delta ,G)$ spanned $k$-linearly by $G$-coloured partition
$n$-diagrams such that $\ol{i}$ is an isolated vertex.
\end{defn}

\begin{defn}
\label{ideals-L-defn}
Let $1\leqslant i <j \leqslant n$. For each $g\in G$, let $L_{i,j,g}$
denote the left ideal of $P_n(\delta ,G)$ spanned $k$-linearly by
$G$-coloured partition $n$-diagrams with $\ol{i}$ and $\ol{j}$ in the
same component and $\gamma(\ol{i},\ol{j})=g$.
\end{defn}

\begin{lem}
The collection of ideals $K_i$ for $1\leqslant i \leqslant n$ and
$L_{i,j,g}$ for $1\leqslant i < j \leqslant n$ and $g\in G$ cover the
two-sided ideal $I_{n-1}$.
\end{lem}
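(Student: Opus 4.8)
The plan is to prove the two inclusions $\sum_i K_i + \sum_{i<j,g} L_{i,j,g} \subseteq I_{n-1}$ and $I_{n-1} \subseteq \sum_i K_i + \sum_{i<j,g} L_{i,j,g}$ separately, working throughout at the level of the $G$-coloured partition $n$-diagrams that furnish $k$-bases for all the modules involved. Since each of $K_i$, $L_{i,j,g}$ and $I_{n-1}$ is spanned by diagrams, it suffices to compare these spanning sets.

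For the first inclusion, I would check that every basis diagram spanning one of the left ideals $K_i$ or $L_{i,j,g}$ has strictly fewer than $n$ propagating components, and hence is a non-permutation diagram lying in $I_{n-1}$. A diagram in $K_i$ has $\ol{i}$ as an isolated vertex, so $\ol{i}$ lies in no propagating component; since every propagating component contains at least one of the $n$ right-hand vertices and these right-vertex sets are disjoint across components, at most $n-1$ right-hand vertices can be distributed among propagating components, giving at most $n-1$ of them. A diagram in $L_{i,j,g}$ has both $\ol{i}$ and $\ol{j}$ in a single component, so the $n$ right-hand vertices are spread across at most $n-1$ components, and again at most $n-1$ of these can be propagating.

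The reverse inclusion is the heart of the argument and rests on a pigeonhole count applied to the right-hand vertices. Let $D$ be a basis diagram of $I_{n-1}$, so $D$ has at most $n-1$ propagating components. I claim that either some $\ol{i}$ is isolated, or some pair $\ol{i},\ol{j}$ lies in a common component. Suppose neither holds: then every right-hand vertex is non-isolated and lies in a component containing no other right-hand vertex, so each $\ol{i}$ sits in a distinct component that must also contain a left-hand vertex (otherwise $\ol{i}$ would be isolated), hence in a distinct propagating component. This produces $n$ distinct propagating components, contradicting $D \in I_{n-1}$. Therefore $D$ lies in $K_i$ for some $i$ when $\ol{i}$ is isolated, or in $L_{i,j,\gamma(\ol{i},\ol{j})}$ for some $i<j$ when $\ol{i}$ and $\ol{j}$ share a component; in either case $D \in \sum_i K_i + \sum_{i<j,g} L_{i,j,g}$.

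I expect the only subtlety to be the bookkeeping in the pigeonhole step, in particular the observation that a right-hand vertex which is neither isolated nor paired with another right-hand vertex is forced into a propagating component, rather than any genuine difficulty; the remainder is a routine comparison of $k$-bases.
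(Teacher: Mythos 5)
Your proof is correct and follows essentially the same route as the paper's: the forward inclusion by bounding the number of propagating components for diagrams in $K_i$ and $L_{i,j,g}$, and the reverse inclusion by the dichotomy that a non-permutation diagram either has two right-hand vertices sharing a component (placing it in some $L_{i,j,g}$) or, by the same pigeonhole count you give, an isolated right-hand vertex (placing it in some $K_i$). Your write-up just makes the counting steps slightly more explicit than the paper does.
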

\begin{proof}
Any basis diagram of $K_i$ for $1\leqslant i \leqslant n$ has an
isolated vertex at $\ol{i}$ and so can have at most $n-1$ propagating
components. Similarly, any basis diagram in $L_{i,j,g}$ for
$1\leqslant i < j \leqslant n$ has a connected component with (at
least) two vertices in the right-hand column and so can have at most
$n-1$ propagating components. Therefore, each $K_i$ and each
$L_{i,j,g}$ is contained in $I_{n-1}$.

Consider a basis diagram $d$ in $I_{n-1}$. If two vertices $\ol{i}$
and $\ol{j}$ are in the same connected component, then $d$ lies in
some $L_{i,j,g}$. Otherwise, $d$ must contain an isolated vertex in
the right-hand column (if not, each vertex in the right-hand column
would be connected to a distinct vertex in the left-hand column, which
contradicts the fact that $d$ has fewer than $n$ propagating
components) and so $d$ lies in some $K_i$. Therefore every basis
element of $I_{n-1}$ is contained in (at least) one of the ideals
$K_i$ and $L_{i,j,g}$.
\end{proof}

\begin{defn}
Let $\ul{n}_{<}^2$ denote the set of indices $(i,j)$ with $1\leqslant
i< j \leqslant n$.
\end{defn}

\begin{rem}
\label{intersection-zero-rem}
We will be considering intersections of these ideals. We note the following.
\begin{itemize}
\item For $g\neq h$, the intersection $L_{i,j,g}\cap L_{i,j,h}$ is zero since the two ideals dictate different colourings on a non-propagating edge from vertex $\ol{i}$ to vertex $\ol{j}$.
\item Suppose we have an intersection of the form $L_{i,j,g}\cap L_{j,k,h}$. If we now take the intersection with $L_{i,k,gh}$ then nothing changes since the edge from $\ol{i}$ to $\ol{k}$ with colouring $gh$ was already determined by the definitions of $L_{i,j,g}$ and $L_{j,k,h}$.
\item On the other hand, if we take the intersection with any ideal of the form $L_{i,k,g^{\prime}}$ with $g^{\prime}\neq gh$ then the intersection would be zero since the  ideals $L_{i,j,g}$ and $L_{j,k,h}$ determine an edge from $\ol{i}$ to $\ol{k}$ with label $gh$ but the ideal $L_{i,k,g^{\prime}}$ dictates an edge from $\ol{i}$ to $\ol{k}$ with label $g^{\prime}$. In other words the intersection $L_{i,j,g}\cap L_{j,k,h}$ and the ideal $L_{i,k,g^{\prime}}$ give contradictory colourings. Indeed, the first bullet point in this list is the simplest example of this.
\end{itemize} 
This will play a role in Lemma \ref{intersection-zero-lem} below.
\end{rem}

This observation leads us to the following definition.

\begin{defn}
\label{contradictory-colouring-defn}
Let $T\subset \ul{n}_{<}^2\times G$. We say that $T$ \emph{determines contradictory colourings} if there exist subsets $T_1,\,T_2\subseteq T$ such that 
\begin{enumerate}
\item a basis element of
\[\bigcap_{((i,j),g)\in T_1} L_{i,j,g}\]
must have an edge from vertex $\ol{v}_1$ to vertex $\ol{v}_2$ with colouring $g_1$ and
\item a basis element of
\[\bigcap_{((i,j),g)\in T_2} L_{i,j,g}\]
must have an edge from vertex $\ol{v}_1$ to vertex $\ol{v}_2$ with colouring $g_2\neq g_1$.
\end{enumerate}
\end{defn}

We began by noting two criteria for intersections of our ideals to be zero.

\begin{lem}
\label{intersection-zero-lem}
Let $S\subset \ul{n}$ and let $T\subset \ul{n}_{<}^2\times G$. The intersection
\[J=\bigcap_{i\in S} K_i \cap \bigcap_{((i,j),g)\in T} L_{i,j,g}\]
is zero if and only if one of the following conditions holds:
\begin{enumerate}
    \item there exists $((i,j),g)\in T$ such that $i\in S$ or $j\in S$;
    \item the set $T$ determines contradictory colourings (in the sense of Definition \ref{contradictory-colouring-defn}). 
\end{enumerate}
\end{lem}
\begin{proof}
Suppose there exists $((i,j),g) \in T$ with $i\in S$ or $j\in
S$. Suppose $J\neq 0$. Since $i\in S$ (or $j\in S$), a diagram in $J$
has an isolated vertex at $\ol{i}$ (or $\ol{j}$). However, since
$((i,j),g) \in T$, $\ol{i}$ must be in the same component as
$\ol{j}$. This is a contradiction and so $J=0$. It follows from the argument in Remark
\ref{intersection-zero-rem} that if $T$ determines contradictory colourings then the intersection 
\[\bigcap_{((i,j),g)\in T} L_{i,j,g}\]
is zero and, therefore, $J=0$.

Conversely, suppose neither of the conditions holds. In this case,
the $G$-coloured partition $n$-diagram with $\ol{i}$ and $\ol{j}$ in
the same component and $\gamma(\ol{i},\ol{j})=g$ for each index
$((i,j),g)\in T$ and isolated vertices elsewhere is an element of $J$,
so $J\neq 0$.
\end{proof}

\begin{lem}
\label{mu-lem}
Let $S\subset \ul{n}$ and let $T\subset \ul{n}_{<}^2\times G$. Let
\[J=\bigcap_{i\in S} K_i \cap \bigcap_{((i,j),g)\in T} L_{i,j,g}.\]
Let $a,b\in \ul{n}\setminus S$ with $a\neq b$.

Let $\mu$ be the partition $n$-diagram whose connected components are
$\{\ol{a}\}$, $\{a, b, \ol{b}\}$ and $\{i, \ol{i}\}$ for $i\neq a, b$,
equipped with the trivial $G$-colouring.
\begin{displaymath}
\begin{tikzpicture}
\node at (0,0.0) {$\bullet$};
\node at (0,0.8) {$\bullet$};
\node at (0,1.6) {$\bullet$};
\node at (0,2.4) {$\bullet$};
\node at (3,0.0) {$\bullet$};
\node at (3,0.8) {$\bullet$};
\node at (3,1.6) {$\bullet$};
\node at (3,2.4) {$\bullet$};
\node at (-0.3,1.6) {$a$};
\node at (3.3,1.6) {$\ol{a}$};
\node at (-0.3,0.8) {$b$};
\node at (3.3,0.8) {$\ol{b}$};
\draw[black!50, rounded corners] (-0.5,2.6) rectangle (3.5,2.2);
\draw[black!50, rounded corners] (-0.5,1.8) -- (0.2,1.8) -- (3.5,1.0) -- (3.5,0.5) -- (-0.5,0.5) -- cycle;
\draw[black!50, rounded corners] (2.8,1.4) rectangle (3.5,1.8);
\draw[black!50, rounded corners] (-0.5,0.2) rectangle (3.5,-0.2);
\end{tikzpicture}
\end{displaymath}
Then either $K_a \cap J = 0$ or $J\cdot \mu \subset K_a \cap J$.
\end{lem}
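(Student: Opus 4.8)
The plan is to prove the equivalent implication: \emph{if $K_a\cap J\neq 0$, then $J\cdot\mu\subseteq K_a\cap J$} (which is logically the same as the stated disjunction). I would begin by observing that each $K_i$ and each $L_{i,j,g}$ is free on a subset of the basis of $G$-coloured partition $n$-diagrams, so their intersection $J$ is free on the set of basis diagrams $x$ satisfying all the defining conditions (each $\ol{i}$ isolated for $i\in S$, and $\ol{i},\ol{j}$ in one component with $\gamma(\ol{i},\ol{j})=g$ for $((i,j),g)\in T$). Since $K_a\cap J$ is a $k$-submodule and $x\cdot\mu$ is either $0$ or a scalar multiple of a single basis diagram, it suffices to fix one such basis diagram $x\in J$ and check that $x\mu\in K_a\cap J$.

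First I would record what $K_a\cap J\neq 0$ buys us. Since $K_a\cap J=\bigcap_{i\in S\cup\{a\}}K_i\cap\bigcap_{T}L_{i,j,g}$, Lemma \ref{intersection-zero-lem} applied to $S\cup\{a\}$ and $T$ tells us that none of its three conditions hold. Failure of condition (1) for $S\cup\{a\}$ means no pair $((i,j),g)\in T$ has $i$ or $j$ in $S\cup\{a\}$; in particular $a$ never occurs as an index in $T$, and no element of $S$ occurs in $T$. This is precisely the compatibility that makes the product land in $J$.

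Next I would analyse the composite $x\mu$, which glues the right column of $x$ to the left column of $\mu$, leaving the right column of $\mu$ as the output. Because $\ol{a}$ is isolated in $\mu$, it stays isolated in $x\mu$, so $x\mu\in K_a$ whenever it is nonzero. For the other outputs, $\mu$ joins $\ol{i}$ (for $i\neq a,b$) to the glued vertex coming from $\ol{i}^{(x)}$, and joins $\ol{b}$ to the glued vertices coming from both $\ol{a}^{(x)}$ and $\ol{b}^{(x)}$, all along trivially coloured edges. Hence the coloured partition on the output right column is obtained from that on the right column of $x$ by relabelling $\ol{i}^{(x)}\mapsto\ol{i}$ for $i\neq a,b$ and identifying $\ol{a}^{(x)},\ol{b}^{(x)}$ with $\ol{b}$, transporting colourings unchanged; if this identification closes a loop of nontrivial colour then $x\mu=0$, which lies in $K_a\cap J$ trivially.

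Finally I would verify $x\mu\in J$. For $i\in S$ we have $i\neq a,b$ and $\ol{i}^{(x)}$ isolated in $x$ (as $x\in K_i$), so $\ol{i}$ is isolated in $x\mu$, giving the $K_i$ condition. For $((i,j),g)\in T$ we know $i,j\neq a$: if also $i,j\neq b$ the relabelling is a local bijection and $x\in L_{i,j,g}$ transports to $\gamma(\ol{i},\ol{j})=g$ directly; if one index equals $b$, the trivial edges of $\mu$ give $\gamma(\ol{i},\ol{b})=\gamma(\ol{i}^{(x)},\ol{b}^{(x)})=g$. The hard part will be exactly this identification at $\ol{b}$: one must confirm that the transported colour is precisely $g$ and that the extra merged vertex $\ol{a}^{(x)}$ cannot spoil a $T$-condition at $\ol{b}$ — which is where the fact that $a$ does not occur in $T$, forced by $K_a\cap J\neq 0$, is essential. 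Assembling these checks shows $x\mu\in K_a\cap J$, completing the argument.
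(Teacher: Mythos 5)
Your proposal is correct and follows essentially the same route as the paper: use $K_a\cap J\neq 0$ together with Lemma \ref{intersection-zero-lem} to rule out $a$ appearing as an index in $T$, then verify each defining condition of $K_a$ and of $J$ on the composite $d\mu$. You are in fact slightly more careful than the paper in two spots — you treat the case where an index of $T$ equals $b$ (where the paper's assertion that $\{j,\ol{j}\}$ is a component of $\mu$ does not literally hold), and you note explicitly that a composite which vanishes (a loop with non-trivial colour) lies in $K_a\cap J$ trivially — but the substance of the argument is identical.
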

\begin{proof}
Suppose $K_a \cap J \neq 0$ and let $d\in J$ be a partition. For each $i\in S$,
$\ol{i}$ is an isolated vertex in $d$, since $d\in J$. Since $i \neq
a$ and $i\neq b$, the vertex $\ol{i}$ is also isolated in the
composite $d\mu$, so $d\mu \in K_i$ for each $i\in S$. If
$((i,j),g)\in T$, then $\ol{i}$ and $\ol{j}$ are in the same component
and $\gamma(\ol{i},\ol{j})=g$ in $d$. Since $K_a \cap J \neq 0$,
neither $i$ nor $j$ can be equal to $a$ by Lemma
\ref{intersection-zero-lem}. Therefore $i$ and $\ol{i}$ lie in the same connected component of $\mu$. Similarly, $j$ and $\ol{j}$ lie in the same connected component of $\mu$. It follows that $\ol{i}$ and $\ol{j}$ are in the same connected component in the composite $d\mu$.
Since
$\gamma_{\mu}(i,\ol{i})=\gamma_{\mu}(j,\ol{j})=1$, we have
$\gamma_{d\mu}(\ol{i},\ol{j})=g$. Therefore, $d\mu \in J$.

Finally, since $\ol{a}$ is an isolated vertex in $\mu$ it must also be
an isolated vertex in $d\mu$, so $d\mu \in K_a$. Therefore $d \mu \in
K_a \cap J$ as required.
\end{proof}

\begin{lem}
\label{retract-lem-1}
Let $S\subset \ul{n}$ and let $T\subset \ul{n}_{<}^2\times G$. Let
\[J=\bigcap_{i\in S} K_i \cap \bigcap_{((i,j),g)\in T} L_{i,j,g}.\]
Let $a,b\in \ul{n}\setminus S$, with $a\neq b$. If $K_a \cap J$ is
non-zero, then right multiplication by the element $\mu$ (of Lemma
\ref{mu-lem}) gives a retraction of the inclusion $K_a \cap J
\rightarrow J$.
\end{lem}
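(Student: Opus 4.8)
The plan is to show that right multiplication by $\mu$ restricts to the
identity on $K_a \cap J$. Since $K_a \cap J \neq 0$ by hypothesis,
Lemma \ref{mu-lem} gives $J\cdot \mu \subset K_a \cap J$, so right
multiplication by $\mu$ is a well-defined map $R_\mu\colon J
\rightarrow K_a \cap J$; it is a homomorphism of left
$P_n(\delta,G)$-modules because right multiplication commutes with the
left action by associativity. Verifying that $R_\mu$ is a retraction
of the inclusion $\iota\colon K_a \cap J \hookrightarrow J$ then
amounts precisely to checking that $d\mu = d$ for every $d \in K_a
\cap J$, and since both sides are $k$-linear in $d$ it suffices to
treat the case where $d$ is a basis $G$-coloured partition
$n$-diagram lying in $K_a \cap J$.

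I would fix such a basis diagram $d$ and analyse the composite $d\mu$
via the recipe for composing cospans: form the finest common partition
of the three columns and restrict to the outer two. The first point to
record is that $\mu$ connects every vertex of its middle column to some
vertex of its right-hand column (each $i$ to $\ol{i}$ for $i\neq a,b$,
and both $a$ and $b$ to $\ol{b}$). Consequently no component of the
composite lies entirely in the middle column, so the number of internal
components is zero and $d\mu$ acquires no factor of $\delta$; this
removes the only possible obstruction to an on-the-nose equality of
diagrams.

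The heart of the argument is then to match the underlying $G$-coloured
partition of $d\mu$ with that of $d$. Here I would use crucially that
$d\in K_a$, so that the middle vertex $\ol{a}$ is isolated in $d$: the
strand of $\mu$ attaching $a$ to $\ol{b}$ therefore drags nothing into
the component of $\ol{b}$ beyond the middle vertex itself, which is
deleted upon restriction to the outer columns. Tracing the remaining
middle vertices through $\mu$ shows that the right-hand vertex $\ol{a}$
of $d\mu$ stays isolated (matching $d$, since $d\in K_a$), while each
right-hand vertex other than $\ol{a}$ inherits exactly the left-hand
neighbours and right-hand co-members it had in $d$. Because $\mu$
carries the trivial colouring and all its connecting strands have label
$1\in G$, every value $\gamma(x,y)$ computed along a path in $d\mu$
reduces to the corresponding value in $d$; in particular the constraints
recorded by $T$ are preserved. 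This yields $d\mu = d$, and hence
$R_\mu\circ\iota = \mathrm{id}$.

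The step I expect to be the main obstacle is the partition- and
colour-matching of the previous paragraph: one must account carefully
for the rerouting performed by $\mu$ at the indices $a$ and $b$, and
confirm that it is exactly the isolatedness of $\ol{a}$ in $d$ that
renders the $a$-to-$b$ strand of $\mu$ inert, so that no spurious
merging or recolouring occurs. The difficulty is one of bookkeeping
rather than of concept.
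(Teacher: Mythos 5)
Your proposal is correct and follows essentially the same route as the paper's proof: reduce to showing $d\mu=d$ on basis diagrams, note that every left-hand vertex of $\mu$ meets the right-hand column so the composite has no internal components (hence no factors of $\delta$), and use the isolatedness of $\ol{a}$ in $d$ to see that the merging performed by $\mu$ is inert and colour-preserving. One caution: your claim that the $\delta$-factor is ``the only possible obstruction'' is imprecise, since in the coloured setting a composite can also vanish outright when the colourings fail to extend; the paper rules this out explicitly (no label mismatch can occur because $\ol{a}$, being isolated, is not in the component of $\ol{b}$), though your later observation that the $a$-to-$b$ strand of $\mu$ is rendered inert by the isolatedness of $\ol{a}$ supplies the same fact in substance.
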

\begin{proof}
For each partition $d\in K_a \cap J$, we must show that $d\mu=d$.

By construction, every vertex in the left-hand column of $\mu$ is
connected to a vertex in the right-hand column and so the composite
$d\mu$ can have no factors of $\delta$. Furthermore, $\ol{a}$ must be
an isolated vertex in $d$ (and, in particular, is not in the same
component as $\ol{b}$) so the composite cannot be zero due to a
mismatch of labels between a non-propagating edge in the right-hand
column of $d$ and a non-propagating edge in the left-hand column of
$\mu$.

Therefore, $\mu$ acts on $d$ by removing
$\ol{a}$ from its connected component and merging the remainder with
the connected component containing $\ol{b}$, whilst preserving all
other connected components. However, since $\ol{a}$ is already an
isolated vertex in $d$, we have $d\mu = d$ as required.
\end{proof}

\begin{lem}
\label{nu-lem}
Let $S\subset \ul{n}$ and let $T\subset \ul{n}_{<}^2\times G$. Let
\[J=\bigcap_{i\in S} K_i \cap \bigcap_{((i,j),g)\in T} L_{i,j,g}.\]
Let $((a,b),h) \in (\ul{n}_{<}^2 \times G) \setminus T$.

We write $\nu$ for the partition $n$-diagram whose connected
components are $\{a,b,\ol{a},\ol{b}\}$ and $\{i, \ol{i}\}$ for $i\neq
a, b$, with the $G$-colouring defined by
\begin{displaymath}
  \gamma_{\nu}(x,y)=
  \begin{cases}
    h,     & \text{if $x\in\{a,b\}$ and $y\in\{\ol{a},\ol{b}\}$;}\\
    h^{-1}, & \text{if $x\in\{\ol{a},\ol{b}\}$ and $y\in\{a,b\}$;}\\
    1,     & \text{otherwise}.
  \end{cases}
\end{displaymath}
\begin{displaymath}
\begin{tikzpicture}
\node at (0,0.0) {$\bullet$};
\node at (0,0.8) (b) {$\bullet$};
\node at (0,1.6) (a) {$\bullet$};
\node at (0,2.4) {$\bullet$};
\node at (3,0.0) {$\bullet$};
\node at (3,0.8) (bbar) {$\bullet$};
\node at (3,1.6) (abar) {$\bullet$};
\node at (3,2.4) {$\bullet$};
\node at (-0.3,1.6) {$a$};
\node at (3.3,1.6) {$\ol{a}$};
\node at (-0.3,0.8) {$b$};
\node at (3.3,0.8) {$\ol{b}$};
\draw[black!50, rounded corners] (-0.5,2.6) rectangle (3.5,2.2);
\draw[black!50, rounded corners] (-0.5,1.8) rectangle (3.5,0.6);
\draw[black!50, rounded corners] (-0.5,0.2) rectangle (3.5,-0.2);
\draw[bw3, ->] (a) -- (b) node[midway, right] {h};
\draw[bw3, ->] (a) -- (abar) node[pos=0.3, below] {1};
\draw[bw3, ->] (abar) -- (bbar) node[midway, left] {h};
\draw[bw3, ->] (b) -- (bbar) node[pos=0.7, above] {1};
\end{tikzpicture}
\end{displaymath}
Then either $L_{a,b,h} \cap J = 0$ or $J\cdot \nu \subset L_{a,b,h} \cap J$.
\end{lem}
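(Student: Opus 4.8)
The plan is to mirror the proof of Lemma \ref{mu-lem}, replacing the isolating diagram $\mu$ by the merging diagram $\nu$ and carefully tracking the $G$-colourings throughout. First I would dispose of the trivial direction by assuming $L_{a,b,h}\cap J\neq 0$. Applying Lemma \ref{intersection-zero-lem} to the pair $(S, T\cup\{((a,b),h)\})$, whose associated intersection is exactly $L_{a,b,h}\cap J$, none of the conditions (1)--(3) can hold. The failure of condition (1) forces $a,b\notin S$, while the failure of conditions (2) and (3) records that the colour data of $T$ is globally compatible with the new relation $\gamma(\ol a,\ol b)=h$; in particular $(a,b,g)\notin T$ for every $g$. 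These are the consistency facts that the rest of the argument will consume.

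By $k$-linearity it suffices to prove $d\nu\in L_{a,b,h}\cap J$ for each basis diagram $d\in J$. As in \cite[Lemma 3.7]{Boyde2}, right multiplication by $\nu$ acts on $d$ by merging the two components of $d$ containing $\ol a$ and $\ol b$, attaching the new right-hand vertices $\ol a,\ol b$ with relative colour $\gamma(\ol a,\ol b)=h$, and propagating every other component $\{i,\ol i\}$ unchanged with trivial relative colour. If this merge closes a loop of nontrivial colour --- which can only happen when $\ol a$ and $\ol b$ already lie in a common component of $d$ with relative colour different from $h$ --- then $d\nu=0$, which lies in every ideal, and we are done. So I would assume $d\nu\neq 0$ and verify the three families of membership conditions.

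For $i\in S$ the vertex $\ol i$ is isolated in $d$; since $i\neq a,b$ the corresponding line of $\nu$ merely propagates it, so $\ol i$ remains isolated and $d\nu\in K_i$. For $((i,j),g)\in T$ I would check that the relation $\gamma(\ol i,\ol j)=g$ survives: the propagating lines of $\nu$ carry colour $1$, and because $\gamma(a,\ol a)=\gamma(b,\ol b)=1$ in $\nu$, the relative colour between $\ol i$ and $\ol j$ in $d\nu$ equals its value $g$ in $d$, whether or not one of $i,j$ lies in $\{a,b\}$ (the case $\{i,j\}=\{a,b\}$ being excluded, since $(a,b,g)\notin T$). Hence $d\nu\in L_{i,j,g}$. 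Finally $d\nu\in L_{a,b,h}$ holds by construction of $\nu$, and so $d\nu\in L_{a,b,h}\cap J$.

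The main obstacle, and the step I would treat most carefully, is the colour-tracking in the second family: establishing that merging the $\ol a$- and $\ol b$-components does not silently alter the relative colour of a surviving $T$-relation having an endpoint at $\ol a$ or $\ol b$, and that any loop created by the merge is either colour-trivial (nonzero composite, all relations preserved) or forces $d\nu=0$. This is precisely where the failure of conditions (2) and (3) of Lemma \ref{intersection-zero-lem} for the enlarged index set is used.
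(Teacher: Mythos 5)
Your proof is correct and follows essentially the same route as the paper's: verify, basis diagram by basis diagram, that right multiplication by $\nu$ preserves membership in each $K_i$ and each $L_{i,j,g}$ and imposes the relation $\gamma(\ol{a},\ol{b})=h$, using Lemma \ref{intersection-zero-lem} (applied to the index data enlarged by $((a,b),h)$) to rule out clashes with $S$ and $T$. If anything you are slightly more careful than the paper, which does not explicitly note that $d\nu$ may vanish when $\ol{a}$ and $\ol{b}$ are already joined in $d$ with colour different from $h$ (harmless, since $0$ lies in every left ideal), nor that $k$-linearity reduces the claim to basis diagrams.
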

\begin{proof}
Suppose $L_{a,b,h} \cap J \neq 0$ and let $d\in J$. For $i\in S$,
$\ol{i}$ is an isolated vertex in $d$. Since $L_{a,b,h} \cap J \neq
0$, we have $a\neq i$ and $b\neq i$ by Lemma
\ref{intersection-zero-lem}. Therefore $\{i ,\ol{i}\}$ is a
connected component of $\nu$ and the composite $d\nu$ also has an
isolated vertex at $\ol{i}$. Therefore $d\nu \in K_i$ for $i\in
S$. 

For $((i,j),g)\in T$, $\ol{i}$ and $\ol{j}$ are in the same
component in $d$ and $\gamma_{d}(\ol{i},\ol{j})=g$. Since $L_{a,b,h} \cap
J \neq 0$ and $((a,b),h) \in (\ul{n}_{<}^2 \times G) \setminus T$, it
follows that $(i,j)$ is distinct from $(a,b)$. Therefore $\ol{i}$ and
$\ol{j}$ are in the same component in $d\nu$, and
$\gamma_{d\nu}(\ol{i},\ol{j})=g$ (this can be deduced as
$\gamma_{d\nu}(\ol{i},\ol{j}) =
\gamma_{\nu}(\ol{i},i)\gamma_{d}(\ol{i},\ol{j})\gamma_{\nu}(j,\ol{j})$). Hence $d\nu \in
L_{i,j,g}$ for each $((i,j),g)\in T$.

Finally, $\ol{a}$ and $\ol{b}$ are in the same component in $\nu$ and
$\gamma_{\nu}(\ol{a},\ol{b})=h$, so $d\nu \in L_{a,b,h}$ as
required.
\end{proof}

\begin{lem}
\label{retract-lem-2}
Let $S\subset \ul{n}$ and let $T\subset \ul{n}_{<}^2 \times G$. Let
\[J=\bigcap_{i\in S} K_i \cap \bigcap_{((i,j),g)\in T} L_{i,j,g}.\]
Let $((a,b),h) \in (\ul{n}_{<}^2 \times G) \setminus T$. If $L_{a,b,h}
\cap J$ is non-zero, then right multiplication by the element $\nu$
(of Lemma \ref{nu-lem}) gives a retraction of the inclusion
$L_{a,b,h} \cap J \rightarrow J$.
\end{lem}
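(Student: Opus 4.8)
The plan is to mirror the proof of Lemma~\ref{retract-lem-1}, carrying along the extra bookkeeping of the $G$-colouring. By Lemma~\ref{nu-lem}, the hypothesis $L_{a,b,h}\cap J\neq 0$ guarantees that right multiplication by $\nu$ carries $J$ into $L_{a,b,h}\cap J$, so $d\mapsto d\nu$ defines a map $J\to L_{a,b,h}\cap J$. For this to be a retraction of the inclusion $L_{a,b,h}\cap J\hookrightarrow J$, its restriction to $L_{a,b,h}\cap J$ must be the identity; hence it suffices to check that $d\nu=d$ for every basis diagram $d\in L_{a,b,h}\cap J$.

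The first step is to dispose of the scalar. Every vertex in the left-hand column of $\nu$ is joined to a vertex in the right-hand column, so the composite $d\nu$ produces no internal components and therefore carries no factor of $\delta$.

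The second step is where the colouring enters, and this is the part I expect to be the main obstacle: one must verify that $d\nu$ is non-zero, that is, that the $G$-colourings of $d$ and of $\nu$ extend compatibly across the gluing of the right-hand column of $d$ to the left-hand column of $\nu$. The only loop that can arise runs between the middle copies of $\ol{a}$ and $\ol{b}$. In $d$ these are joined with $\gamma(\ol{a},\ol{b})=h$ precisely because $d\in L_{a,b,h}$, while in $\nu$ the corresponding vertices $a$ and $b$ are joined with $\gamma(a,b)=h$ by construction; since these labels agree, the loop has trivial product and the composite does not vanish. This is exactly where the hypothesis $d\in L_{a,b,h}$ is used, and it is the colour-theoretic analogue of the condition that $\ol{a}$ be isolated in Lemma~\ref{retract-lem-1}.

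Finally, as argued in \cite[Lemma 3.7]{Boyde2}, $\nu$ acts on $d$ by merging the connected component of $\ol{a}$ with that of $\ol{b}$ and leaving every other component untouched. Since $d\in L_{a,b,h}$, the vertices $\ol{a}$ and $\ol{b}$ already lie in a common component with $\gamma(\ol{a},\ol{b})=h$, so this merging changes nothing: the underlying partition of $d\nu$ equals that of $d$, and because the remaining edges of $\nu$ are trivially coloured, the colouring of $d\nu$ agrees with the colouring of $d$. Hence $d\nu=d$, and right multiplication by $\nu$ is the required retraction.
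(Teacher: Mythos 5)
Your proof is correct and follows essentially the same route as the paper's: no $\delta$-factors since every left-hand vertex of $\nu$ is propagating, non-vanishing of $d\nu$ because the label $h$ on the non-propagating edge $\{a,b\}$ of $\nu$ matches the label $\gamma(\ol{a},\ol{b})=h$ in $d$, and then Boyde's merging argument gives $d\nu=d$; your extra step of invoking Lemma~\ref{nu-lem} to see that $-\cdot\nu$ lands in $L_{a,b,h}\cap J$ is implicit in the paper as well. The only quibble is that the merging argument here is \cite[Lemma 3.9]{Boyde2} (the paper cites Lemma 3.7 only for the analogous statement about $\mu$ in Lemma~\ref{retract-lem-1}), which is immaterial.
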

\begin{proof}
For each partition $d\in L_{a,b,h} \cap J$, we must show that $d\nu=d$.

By construction, every vertex in the left-hand column of $\nu$ is
connected to a vertex in the right-hand column and so the composite
$d\nu$ can have no factors of $\delta$. The diagram $\nu$ has one
non-propagating edge in the left-hand column and this corresponds to a
non-propagating edge in the right-hand column of $d$ with matching
label and so the composite is non-zero.

Therefore, right multiplication by $\nu$
acts on $d$ by merging the connected components containing $\ol{a}$
and $\ol{b}$, whilst preserving all other connected components. Since
$\ol{a}$ and $\ol{b}$ are already connected in $d$, we have $d\nu=d$
as required.
\end{proof}

\begin{lem}
\label{height-lem}
If $S\neq \ul{n}$, then the left ideal
\[J=\bigcap_{i\in S} K_i \cap \bigcap_{((i,j),g)\in T} L_{i,j,g}\]
is either zero or principal and generated by an idempotent.
\end{lem}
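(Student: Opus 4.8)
The plan is to reduce the whole statement to the construction of a single idempotent. If $J=0$ there is nothing to prove, so I assume $J\neq 0$; then by Lemma~\ref{intersection-zero-lem} none of the three obstruction conditions can hold. I will use the elementary fact that a left ideal $J$ of a $k$-algebra $A$ is principal and generated by an idempotent as soon as there is an element $e\in J$ with $e^2=e$ and $de=d$ for every $d\in J$: indeed $Ae\subseteq J$ because $e\in J$ and $J$ is a left ideal, while $d=de\in Ae$ for every $d\in J$ gives the reverse inclusion, so $J=Ae$. Thus it suffices to exhibit such an $e$ inside $J$.

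To build $e$, I would write $A=P_n(\delta,G)$ and realise $J$ as the end of a descending chain
\[A=J_0\supseteq J_1\supseteq\cdots\supseteq J_r=J,\]
where each $J_t$ is obtained from $J_{t-1}$ by intersecting with one of the generating ideals: one step for each $L_{i,j,g}$ with $((i,j),g)\in T$ and one step for each $K_i$ with $i\in S$, taken in any order. Since $J\neq 0$ and $J\subseteq J_t$ for every $t$, each intermediate ideal is nonzero, so the retraction lemmas will apply in their stated nonzero case at every step. For a step $J_t=J_{t-1}\cap L_{a,b,h}$, in which the index $((a,b),h)$ has not been used earlier and so lies outside the current $T$, Lemmas~\ref{nu-lem} and~\ref{retract-lem-2} show that right multiplication by the corresponding $\nu$ maps $J_{t-1}$ into $J_t$ and restricts to the identity on $J_t$. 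For a step $J_t=J_{t-1}\cap K_a$ I must first produce an auxiliary vertex $b\in\ul{n}\setminus S_{\mathrm{current}}$ with $b\neq a$ in order to invoke Lemmas~\ref{mu-lem} and~\ref{retract-lem-1}; this is possible because the $K$-indices accumulated so far together with $a$ form a subset of $S$, hence number at most $|S|\leqslant n-1$, so their complement in $\ul{n}$ is nonempty. With such a $b$ fixed, right multiplication by the corresponding $\mu$ again maps $J_{t-1}$ into $J_t$ and is the identity on $J_t$. In either case I write $x_t$ for the diagram ($\mu$ or $\nu$) used at step $t$.

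Finally I would set $e=x_1x_2\cdots x_r$ and verify the three required properties by composing the retractions. Since right multiplication by an element is a map of left $A$-modules, right multiplication by $e$ equals the composite of the individual right multiplications. Evaluating at any $d\in A$ and running down the chain gives $de\in J_r=J$, so $Ae\subseteq J$; evaluating at any $d\in J\subseteq J_t$ (for every $t$), each $x_t$ acts as the identity, so $de=d$, and in particular $J\subseteq Ae$. Hence $J=Ae$. Moreover $e=1\cdot e\in Ae=J$, and applying the relation $de=d$ to $d=e$ yields $e^2=e$. Thus $e$ is the desired idempotent generator and $J=Ae$ is principal.

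The main obstacle is the fresh-vertex condition built into Lemma~\ref{mu-lem}: splitting off a constraint $K_a$ by right multiplication requires a spare vertex $b$, outside the current set of $K$-indices, to absorb the component of $\ol{a}$. This is precisely where the hypothesis $S\neq\ul{n}$ enters — it guarantees that the complement of the $K$-index set is never exhausted as the chain is assembled — and it is the reason the hypothesis cannot be dropped. The only other point to keep in view is that every intermediate ideal in the chain must be nonzero, which is immediate from $J\neq 0$ and is exactly what allows the retraction lemmas to be applied in their nonzero case throughout.
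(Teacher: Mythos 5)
Your proof is correct and takes essentially the same route as the paper: a chain of intersections split off one ideal at a time via the retractions of Lemmas \ref{retract-lem-1} and \ref{retract-lem-2} (with $S\neq\ul{n}$ supplying the auxiliary vertex at each $K$-step), composed to give a retraction of $J\hookrightarrow P_n(\delta,G)$ by right multiplication, whence $J$ is principal and generated by an idempotent. The only deviations are cosmetic: the paper performs all $K$-steps before the $L$-steps and cites \cite[Lemma 2.5]{Boyde2} for the final step, whereas you interleave the steps in arbitrary order and prove that step directly via the explicit idempotent $e=x_1\cdots x_r$.
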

\begin{proof}
Suppose $J\neq 0$. Lemma \ref{intersection-zero-lem} tells us that the sets $S$ and $\bigcup_{(i,j)\in T} \lbrace i,j\rbrace$ are disjoint. It also tells us that there does not exist a combination of indices in $T$ preventing a valid colouring.

We
inductively apply Lemma \ref{retract-lem-1} to get a retraction
$P_n(\delta,G)\rightarrow \bigcap_{i\in S} K_i$ (which is valid
because of the condition $S\neq \ul{n}$) and then inductively apply
Lemma \ref{retract-lem-2} to obtain a retraction $ \bigcap_{i\in S}
K_i \rightarrow J$. These retractions are given by right multiplication by $G$-coloured partition diagrams of the form $\mu$ and $\nu$. It now follows from \cite[Lemma 2.5]{Boyde2} that $J$ is principal and generated by an idempotent.
\end{proof}

We can now prove most of Theorem \ref{thm-A}. In the next subsection, we will extend the range by one.

\begin{thm}
\label{iso-thm1}
There exist natural isomorphisms of $k$-modules
\[ \Tor_{q}^{P_n(\delta,G)}(\mathbbm{1},\mathbbm{1}) \cong \Tor_q^{k[G\wr  \Sigma_n]}(\mathbbm{1},\mathbbm{1}) \quad \text{and} \quad \Ext_{P_n(\delta,G)}^q(\mathbbm{1},\mathbbm{1}) \cong \Ext_{k[G\wr \Sigma_n]}^q(\mathbbm{1},\mathbbm{1})\]
for $q\leqslant n-1$. Furthermore, if $\delta$ is invertible, these isomorphisms hold for all $q$.
\end{thm}
\begin{proof}
Let $\delta$ be any element of $k$. Recall that $I_{n-1}$ is the two-sided ideal of $P_n(\delta,G)$
spanned $k$-linearly by the non-permutation diagrams. By Lemma
\ref{partition-ideal-iso-lem}, there is an isomorphism of $k$-algebras
$P_n(\delta,G)/I_{n-1} \cong k[G\wr \Sigma_n]$.

Lemma \ref{height-lem} tells us that the left ideals $K_i$ and
$L_{i,j,g}$ form a $k$-free idempotent left cover of $I_{n-1}$ of
height $n-1$. The isomorphisms now follow from
Proposition \ref{Boyde-FG-prop}.

By Lemmas \ref{intersection-zero-lem} to \ref{height-lem}, the only non-zero intersection of ideals in our idempotent cover that is not principal and generated by an idempotent is the $n$-fold intersection $K_1\cap \cdots \cap K_n$.

Now suppose that $\delta$ is invertible. It suffices to show that, in this case, the intersection $K_1\cap \cdots \cap K_n$ is principal and generated by idempotent. We will then have a $k$-free idempotent left cover whose height is equal to the width and the result follows from Proposition \ref{Boyde-FG-prop}.

Let $d$ be the coloured partition $n$-diagram such that all $2n$ vertices are isolated. Then for any $y\in K_1\cap \cdots \cap K_n$, we have $yd=\delta^n y$. In particular, this tells us that $\delta^{-n}d$ is idempotent and that right multiplication by $\delta^{-n}d$ gives a retraction $P_n(\delta,G)\rightarrow K_1\cap \cdots \cap K_n$. It follows that $K_1\cap \cdots \cap K_n$ is principal and generated by an idempotent. 
\end{proof}

\subsection{Extending the range}

Throughout this section, we do not assume that $\delta\in k$ is invertible. We begin by describing the terms in our Mayer-Vietoris complex. We show that the (co)homology of the coloured partition algebras over the group algebras $k[G\wr \Sigma_n]$ can be described in terms of the $n$-fold intersection of ideals $K_1\cap \cdots \cap K_n$. Finally, we feed this into a change-of-rings spectral sequence.

Recall the Mayer-Vietoris complex from Definition \ref{MV-defn}. We begin by analysing the Mayer-Vietoris complex, $C_{\star}$, associated to our $k$-free idempotent left cover. Recall that $C_q$ takes the form of a direct sum of $q$-fold intersections of ideals in our $k$-free idempotent left cover. We have shown that the terms $C_0$ up to $C_{n-1}$ in the Mayer-Vietoris form a partial projective resolution of $\1$ by left $k[G\wr\Sigma_n]$-modules and used this to deduce the range in Theorem \ref{thm-A}. We now wish to consider the remaining terms in the Mayer-Vietoris complex.

\begin{lem}
\label{K-intersection-lem}
We have
\begin{itemize}
\item $\1\otimes_{P_n(\delta, G)} C_n=\1\otimes_{P_n(\delta, G)} (K_1\cap \cdots \cap K_n)$ and
\item $\Hom_{P_n(\delta, G)}(C_n,\1)=\Hom_{P_n(\delta, G)}(K_1\cap \cdots \cap K_n,\1)$.
\end{itemize}
Furthermore, $\1\otimes_{P_n(\delta, G)} C_q=0$ and $\Hom_{P_n(\delta, G)}(C_q,\1)=0$ for $q>n$.
\end{lem}
\begin{proof}
We have shown that any $(n-1)$-fold intersection of ideals in our cover is either zero or is principal and generated by an idempotent. We have two types of non-zero $n$-fold intersections: the intersection $K_1\cap \cdots \cap K_n$ and intersections of the form considered in Lemma \ref{height-lem}. The intersections of the form considered in Lemma \ref{height-lem} are generated by idempotents and their basis diagrams act on $\1$ as multiplication by $0\in k$. Therefore \cite[Lemma 2.3]{Boyde2} and \cite[Lemma 3.4]{FG-dTL} tell us that we get zero when we apply the functors $\1 \otimes _{P_n(\delta, G)} -$ and $\Hom_{P_n(\delta, G)}(-,\1)$ to these intersections. We are left with the intersection $K_1\cap \cdots \cap K_n$ and we obtain the first part of the statement. Furthermore, any $(n+1)$-fold intersections (or indeed any intersection of $q>n$ ideals) is either zero or of the form considered in Lemma \ref{height-lem} so $\1\otimes_{P_n(\delta, G)} C_q=0$ and $\Hom_{P_n(\delta, G)}(C_q,\1)=0$ for $q>n$
\end{proof}

We calculate the (co)homology of the coloured partition algebra over the group algebra.

\begin{prop}
\label{hom-over-group-ss-prop}
There exist isomorphisms of $k$-modules
\[\Tor_{q}^{P_n(\delta,G)}(\1,k[G\wr \Sigma_n])\cong \begin{cases}
k & q=0\\
0 & 1\leqslant q\leqslant n-1\\
\Tor_{q-n}^{P_n(\delta,G)}(\1,K_1\cap \cdots \cap K_n) & q\geqslant n.
\end{cases}\]
\end{prop}
\begin{proof}
Let $Q_{\star}$ be a projective resolution of $\1$ by right $P_n(\delta,G)$-modules and consider the double complex $Q_{\star}\otimes_{P_n(\delta,G)} C_{\star}$, with the boundary maps from $Q_{\star}$ in the horizontal direction and the boundary maps from $C_{\star}$ in the vertical direction. Since $C_{\star}$ is acyclic with an augmentation to $k[G\wr\Sigma_n]$, and since $Q_{\star}$ is a projective resolution of $\1$, we see that the vertical-homology-first spectral sequence collapses on the $E^2$-page where it consists of the groups $\Tor_{q}^{P_n(\delta,G)}(\1,k[G\wr \Sigma_n])$ concentrated in row zero.

The horizontal-homology-first spectral sequence collapses on the $E^1$-page with
\[E_{\alpha,\beta}^1\cong \begin{cases}
k & (\alpha,\beta)=(0,0),\\
\Tor_{\alpha}^{P_n(\delta,G)}(\1,K_1\cap \cdots \cap K_n) & (\alpha,\beta)=(\alpha,n),\\
0 & \text{otherwise}.
\end{cases}\] 
When $n=1$, there is a differential on the $E^1$-page which could possibly be non-zero, namely the map $E_{0,1}\rightarrow E_{0,0}$ induced from the differential $\1\otimes_{P_1(\delta,G)} K_1\rightarrow \1\otimes_{P_1(\delta,G)} P_1(\delta,G)$ in the Mayer-Vietoris complex. However, this map sends $\lambda\otimes d \mapsto \lambda\otimes d= \lambda\otimes d\cdot 1=\lambda\cdot d\otimes 1=0$ and so the differential on the $E^1$-page is zero. Therefore, the horizontal-homology-first spectral sequence always collapses on the $E^1$-page and we can read off the result.
\end{proof}

\begin{prop}
\label{cohom-over-group-ss-prop}
There exist isomorphisms of $k$-modules
\[\Ext_{P_n(\delta,G)}^q(k[G\wr \Sigma_n],\1)\cong \begin{cases}
k & q=0\\
0 & 1\leqslant q\leqslant n-1\\
\Ext_{P_n(\delta,G)}^{q-n}(K_1\cap \cdots \cap K_n,\1) & q\geqslant n.
\end{cases}\]
\end{prop}
\begin{proof}
The result is proved in a similar way to Proposition \ref{hom-over-group-ss-prop}. We only state the key differences in the proof. Let $I^{\star}$ be an injective resolution of $\1$ by left $P_n(\delta,G)$-modules and consider the bicomplex $\Hom_{P_n(\delta,G)}(C_{\star},I^{\star})$ with the boundary maps from $C_{\star}$ in the horizontal direction and the boundary maps from $I^{\star}$ in the vertical direction. The horizontal-homology-first spectral sequence collapses on the $E_2$-page, where it consists of the groups $\Ext_{P_n(\delta,G)}^{\star}(k[G\wr\Sigma_n],\1)$ concentrated in column zero. Furthermore, the vertical-homology-first spectral sequence collapses on the $E_1$-page with 
\[E_1^{\alpha,\beta}\cong \begin{cases}
k & (\alpha , \beta) = (0,0)\\
\Ext_{P_n(\delta,G)}^{\beta}(K_1\cap\cdots \cap K_n,\1) & (\alpha,\beta)=(n,\beta)\\
0 & \text{otherwise}
\end{cases}\] 
as required. (Note that in this case, when $n=1$ there is a differential on the $E_1$-page which could possibly have been non-zero, namely the map $E_{1}^{0,0}\rightarrow E_{1}^{1,0}$. This is the restriction map $\Hom_{P_1(\delta,G)}(P_1(\delta,G),\1)\rightarrow \Hom_{P_1(\delta,G)}(K_1,\1)$. However, any $f\colon P_1(\delta,G)\rightarrow\1$ restricts to the zero map on $K_1$: we use linearity to write $f(d)=df(1)$ and note that basis diagrams in $K_1$ act as multiplication by $0\in k$ on $\1$.)
\end{proof}

We will use these propositions to show that the $n^{\mathrm{th}}$ (co)homology group of the coloured partition algebras is isomorphic to the $n^{\mathrm{th}}$ (co)homology group of the wreath product. Before doing so, we require the following lemma.

\begin{lem}
\label{K-decomp-lem}
For $n\geqslant 2$, every basis diagram $d\in K_1\cap \cdots \cap K_n$ can be written as $d^{\prime}d$ where $d^{\prime}\in P_n(\delta,G)$ is a non-permutation diagram.
\end{lem}
\begin{proof}
We have two cases to consider. Firstly consider the diagram $e \in P_n(\delta , G)$ which consists solely of isolated vertices. Let $d^{\prime}$ be the diagram with components $\lbrace 1,\ol{1},\ol{2},\dotsc,\ol{n}\rbrace$ with the trivial colouring and singletons $\lbrace i\rbrace$ for $2\leqslant i \leqslant n$. Then $d^{\prime}e=e$ and we can have no factors of $\delta$ since every vertex in the right-hand column of $d^{\prime}$ is in the same component as a vertex in the left-hand column.

Now suppose that $d\in K_1\cap \cdots \cap K_n$ has at least component with cardinality two or greater. In particular, $d$ has at least one non-propagating edge in the left-hand column. Let $d^{\prime}$ be the diagram defined as follows.
\begin{itemize}
\item $d^{\prime}$ has a non-propagating edge from $i$ to $j$ with label $g$ if and only if $d$ does.
\item $d^{\prime}$ has an edge from $i$ to $\ol{i}$ with the trivial colouring for all $1\leqslant i\leqslant n$.
\end{itemize}  
The diagram $d^{\prime}$ cannot be a permutation diagram since it contains at least one non-propagating edge. One readily checks that $d^{\prime} d=d$: the definition of $d^{\prime}$ means that the composite preserves the non-propagating edges and isolated vertices in the left-hand column of $d$ without introducing any new edges. Furthermore, we can obtain no factors of $\delta$ because every vertex in the right-hand column of $d^{\prime}$ is the same component as a vertex in the left-hand column.
\end{proof}

\begin{prop}
\label{vanish-prop}
For any $\delta \in k$, there exist isomorphisms of $k$-modules
\[\Tor_n^{P_n(\delta,G)}(\1,k[G\wr \Sigma_n]) =0 \quad \text{and} \quad \Ext_{P_n(\delta,G)}^n(k[G\wr \Sigma_n],\1)=0.\]
\end{prop}
\begin{proof}
By Propositions \ref{hom-over-group-ss-prop} and \ref{cohom-over-group-ss-prop}, we have
\[\Tor_n^{P_n(\delta,G)}(\1,k[G\wr \Sigma_n])\cong \1\otimes_{P_n(\delta,G)} K_1\cap \cdots \cap K_n\]
and
\[\Ext_{P_n(\delta,G)}^n(k[G\wr \Sigma_n],\1)\cong \Hom_{P_n(\delta,G)}(K_1\cap \cdots \cap K_n , \1).\]
The module $\1\otimes_{P_n(\delta,G)} K_1\cap \cdots \cap K_n$ is spanned $k$-linearly by elementary tensors of the form $1\otimes d$ where $d$ is a basis diagram in $K_1\cap \cdots \cap K_n$. However, using Lemma \ref{K-decomp-lem}, we have
\[1\otimes d = 1\otimes d^{\prime}d= 1\cdot d^{\prime}\otimes d= 0,\]
and for any $f\in \Hom_{P_n(\delta,G)}(K_1\cap \cdots \cap K_n,\1)$ we have
\[f(d)=f(d^{\prime}d)=d^{\prime}f(d)=0\]
for all basis diagrams $d\in K_1\cap \cdots \cap K_n$.
\end{proof}

\begin{thm}
\label{iso-thm2}
For any $\delta \in k$, there exist isomorphisms of $k$-modules
\[\Tor_n^{P_n(\delta,G)}(\1,\1) \cong H_{n}(G\wr\Sigma_n,\1) \quad \text{and} \quad \Ext_{P_n(\delta,G)}^n(\1,\1)\cong H^{n}(G\wr\Sigma_n,\1).\]
\end{thm}
\begin{proof}
Consider the map of $k$-algebras $P_n(\delta,G)\rightarrow k[G\wr\Sigma_n]$. Recall from \cite[Theorem 12.1]{McCleary} that there exists a homologically-graded first quadrant change-of-rings spectral sequence of the form: 
\[E_{\alpha,\beta}^2\cong \Tor_{\alpha}^{k[G\wr \Sigma_n]}(\Tor_{\beta}^{P_n(\delta,G)}(\1,k[G\wr \Sigma_n]),\1)\Rightarrow \Tor_{\alpha+\beta}^{P_n(\delta,G)}(\1,\1).\]
By Propositions \ref{hom-over-group-ss-prop} and \ref{vanish-prop}, we have $E_{\alpha,0}^2\cong H_{\alpha}(G\wr\Sigma_n,\1)$, $E_{0,n}^2=0$ and $E_{\alpha,\beta}^2=0$ for $1\leqslant \beta\leqslant n$ from which the homological statement follows.

Similarly, there exists a cohomologically graded first quadrant change-of-rings spectral sequence
\[ E_2^{\alpha,\beta}\cong \Ext_{k[G\wr\Sigma_n]}^{\alpha}(\1, \Ext_{P_n(\delta,G)}^{\beta}(k[G\wr\Sigma_n],\1))\Rightarrow \Ext_{P_n(\delta,G)}^{\alpha+\beta}(\1,\1)\]
and Propositions \ref{cohom-over-group-ss-prop} and \ref{vanish-prop} imply the cohomological statement.
\end{proof}

\end{document}